\newtheorem{thm}{Theorem}
\title[Multicritical random partitions]{Multicritical random partitions}
\author[D. Betea, J. Bouttier, H. Walsh]{Dan Betea\thanks{\href{mailto:dan.betea@gmail.com}{dan.betea@gmail.com}. D.B. is partially supported by FWO Flanders project EOS 30889451.}\addressmark{1}, J\'er\'emie Bouttier\thanks{\href{mailto:jeremie.bouttier@ipht.fr}{jeremie.bouttier@ipht.fr}. J.B. is partially supported by the project ANR-18-CE40-0033 ``Dimers''.}\addressmark{2,3}, \and Harriet Walsh\thanks{\href{mailto:harriet.walsh@ens-lyon.fr}{harriet.walsh@ens-lyon.fr}. This project has received funding from the European Research Council (ERC) under the European Union’s Horizon 2020 research and innovation programme (grant agreement No. ERC-2016-STG 716083, "CombiTop").}\addressmark{3,4}}
\address{\addressmark{1} Department of Mathematics, KU Leuven, Belgium \\ \addressmark{2} Université Paris-Saclay, CNRS, CEA, Institut de physique théorique, 91191 Gif-sur-Yvette, France\\ \addressmark{3} Univ Lyon, ENS de Lyon, Univ Claude Bernard, CNRS, Laboratoire de Physique, F-69342 Lyon\\ \addressmark{4} Université de Paris, CNRS, IRIF, F-75006, Paris, France}
\abstract{We study two families of probability measures on integer
  partitions, which are Schur measures with parameters tuned in such a
  way that the edge fluctuations are characterized by a critical
  exponent different from the generic $1/3$. We find that the first
  part asymptotically follows  a ``higher-order analogue'' of the
  Tracy--Widom GUE distribution, previously
  encountered by Le Doussal, Majumdar and Schehr in quantum
  statistical physics. We also compute limit shapes, and discuss an
  exact mapping between one of our families and the multicritical
  unitary matrix models introduced by Periwal and Shevitz.}
\newtheorem{prop}{Proposition}
\newtheorem{lem}[prop]{Lemma}
\theoremstyle{definition}
\newtheorem{rem}[prop]{Remark}
\newtheorem{ex}[prop]{Example}
\def\Z{\mathbb{Z}}
\def\R{\mathbb{R}}
\def\P{\mathbb{P}}
\def\E{\mathbb{E}}
\def\Ai{\mathrm{Ai}}
\def\mcA{\mathcal{A}}
\def\oe{\mathrm{oe}}
\def\o{\mathrm{o}}
\def\tr{\mathrm{tr}}
\def\dx{\mathrm{d}}
\newcommand{\smallbin}[2]{\left(\begin{smallmatrix}#1 \\ #2\end{smallmatrix}\right)}
\begin{document}

\maketitle

\section{Introduction}

\paragraph{Background.} An \emph{integer partition}, hereafter called
\emph{partition} for short, is a nonincreasing sequence
$\lambda = (\lambda_1 \geq \lambda_2 \geq \dots \geq 0)$ of
nonnegative integers which is eventually zero.  Its \emph{size} is
$|\lambda| := \sum_i \lambda_i$. The \emph{conjugate partition}
$\lambda'$, given by $\lambda'_j := |\{ i:\lambda_i \geq j \}|$, has
the same size as $\lambda$, and in particular $\lambda'_1$ is equal to
the number of nonzero elements of $\lambda$.

Schur measures, introduced by Okounkov~\cite{oko01}, are probability
measures on integer partitions of the form
\begin{equation} \label{eq:schur_measure_def}
  \P(\lambda) = Z^{-1} s_\lambda [\theta_1,\theta_2,\dots] s_\lambda [\theta'_1,\theta'_2,\dots].
\end{equation}
Here, the $\theta_i,\theta'_i$ are numbers such that
$Z = \exp \sum_{i\geq 1} \frac{\theta_i \theta'_i}{i}$ is well-defined, and
$s_\lambda [\theta_1,\theta_2,\dots]$ is the Schur symmetric function
indexed by $\lambda$ and evaluated at the specialization sending the
$i$-th power sum $p_i$ to the value $\theta_i$, for all $i \geq 1$. A
more concrete expression is given by the Jacobi--Trudi identity
$s_\lambda[\theta_1,\theta_2,\dots] = \det_{i,j}
h_{\lambda_i-i+j}[\theta_1,\theta_2,\dots]$, the entries of the
determinant being given by the generating series
$\sum_{k \geq 0}
h_k[\theta_1,\theta_2,\dots] z^k = \exp \sum_{i \geq 1}
\tfrac{\theta_i z^i}{i}$. See~\cite{mac} for background on
symmetric functions and specializations.

\begin{ex}
  \label{ex:ppm}
  For $\theta_1=\theta_1'=\theta$, and all other $\theta_i,\theta_i'$
  set to zero, we obtain the \emph{poissonized Plancherel measure}
  $\P(\lambda) = e^{-\theta^2} \left(\theta^{|\lambda|}
  \frac{f_\lambda}{|\lambda|!}\right)^2$, discussed below. Here, $f_\lambda$
  denotes the number of standard Young tableaux of shape $\lambda$.
\end{ex}

\begin{ex}
  \label{ex:meastwo}
  For $\theta_1=\theta_1'$, $\theta_2=\theta_2'$, and all other
  $\theta_i,\theta_i'$ set to zero, we get
  \begin{equation}
    \P(\lambda) = e^{-\theta_1^2-\theta_2^2/2} \sum_{\mu = 1^{a_1} 2^{a_2}} \sum_{\nu = 1^{b_1} 2^{b_2}} \frac{ \chi^\lambda (\mu) \chi^\lambda (\nu) \theta_1^{a_1+b_1} \theta_2^{a_2+b_2}}{2^{a_2+b_2} a_1! a_2! b_1! b_2!} 
  \end{equation}
  where $\chi^{\lambda}$ is the irreducible character of the symmetric group $S_{|\lambda|}$ indexed by $\lambda$ and $\mu, \nu$ are two-column partitions with $|\lambda| = |\mu| = |\nu|$.
\end{ex}

Schur measures and their generalizations appear in several combinatorial, probabilistic, and statistical mechanical models of mathematical and physical interest. For a brief list, see~\cite{oko01, oko03, bg16} and references therein. One notable instance is the resolution of Ulam's problem
on longest increasing subsequence of random permutations.
Namely, if we consider the poissonized Plancherel measure in
Example~\ref{ex:ppm}, then the Baik--Deift--Johansson theorem~\cite{bdj99} states that
the first part $\lambda_1$ satisfies
\begin{equation} \label{eq:bdj}
    \lim_{\theta \to \infty} \P \left[ \frac{\lambda_1 - 2 \theta}{\theta^{1/3}} < s \right] = F_{\rm TW} (s)
\end{equation}
with $F_{\rm TW} (s)$ the Tracy--Widom GUE distribution~\cite{tw94_airy} from random matrix theory. By Schensted's theorem~\cite{sch61}, $\lambda_1$ is equal in distribution to the longest increasing subsequence of a random permutation on $S_N$, the symmetric group of $N$ letters, where $N$ in our case is a Poisson random variable $N \sim {\rm Poisson}(\theta^2)$. See~\cite{rom15} for more on this topic.

\paragraph{Main contribution.} We consider \emph{multicritical} Schur measures, having as their salient
feature an ``edge'' behavior different from~\eqref{eq:bdj}. More
precisely, for every $n \geq 2$, we construct Schur measures for which
the $1/3$ fluctuation exponent is replaced by $1/(2n+1)$ (we recover
the poissonized Plancherel measure for $n=1$).  The limiting
distribution then becomes a ``higher-order analogue'' of the
Tracy--Widom distribution. It is a $\tau$-function of a higher-order
differential equation of the Painlev\'e II hierarchy~\cite{ccg19} in the same way the Tracy--Widom distribution is for the ``classical''
Painlev\'e II equation~\cite{tw94_airy}.

Our inspiration comes from the work of Le Doussal, Majumdar and
Schehr~\cite{ldms18}, who found the same limiting distributions in the
momenta statistics of fermions in nonharmonic traps. They also noted a
coincidental connection with the multicritical unitary matrix models
of Periwal and Shevitz~\cite{ps90}, which involve the Painlevé II
hierarchy in their double scaling limit.

Our multicritical Schur measures explain the origin of this
connection. On the one hand, as observed by Okounkov~\cite{oko01},
Schur measures admit a convenient description in terms of free
fermions. Simple scaling arguments show that they have the same
asymptotic edge behavior as the models considered in~\cite{ldms18}. On
the other hand, through a chain of classical identities that we will
review, the distribution of $\lambda_1$ in a Schur measure can be
expressed as the partition function of a unitary matrix model. For
multicritical measures, we recover \emph{exactly} the models
of~\cite{ps90}. Let us point out that there is a known connection between Ulam's
problem and the Gross--Witten unitary matrix model, see~\cite{joh98}
and references therein. We comment on the relation with our work in
the conclusion.

\paragraph{Outline.}  In Section~\ref{sec:edge}, we define the
multicritical Schur measures and state our main theorems
(Theorems~\ref{thm:edge_oe} and~\ref{thm:edge_o}) concerning their
edge behavior. We compute limit shapes in
Section~\ref{sec:ls}. Section~\ref{sec:toep} reviews the connection
between Schur measures and unitary matrix integrals. The proof of
Theorems~\ref{thm:edge_oe} and~\ref{thm:edge_o} is sketched in
Section~\ref{sec:proof}. Finally, concluding remarks are gathered in
Section~\ref{sec:conc}.

This is an extended abstract of the paper~\cite{BBWlong}. For brevity,
we do not include a discussion of the physical interpretation in terms
of fermions here, but we note that they manifest themselves via the
determinantal point processes used in Section~\ref{sec:proof}.

\section{Multicritical Schur measures and their edge behavior}
\label{sec:edge}

A partition~$\lambda$ may be characterized by the set
$S(\lambda)=\{ \lambda_i - i + \frac{1}{2} | i \geq 1 \} \subset
\Z+\frac12$, see Figure~\ref{fig:ydtomaya} below. Note that the
largest element of $S(\lambda)$ is $\lambda_1-\frac12$, and the
smallest element of its complement is $-\lambda_1'+\frac12$, since
$-S(\lambda)$ is the complement of $S(\lambda')$.  When $\lambda$ is a
distributed according to a Schur measure~\eqref{eq:schur_measure_def},
it was shown by Okounkov~\cite{oko01} that $S(\lambda)$ is a
determinantal point process, whose kernel admits an explicit
expression (given in Section~\ref{sec:proof}) in terms of the
$\theta_i,\theta_i'$ parameters.

The study of the edge behavior---the statistics of the largest
element(s) of $S(\lambda)$, or of the smallest element(s) of its
complement---is most conveniently done via a saddle-point
analysis~\cite{oko03}. For generic parameters $\theta_i,\theta_i'$
(and, in particular, for the poissonized Plancherel measure), it is
found that the edge behavior is characterized by the coalescence of
two saddle points, which implies that the ``action'' has a double
critical point, also known as ``monkey saddle'', explaining the $1/3$
fluctuation exponent. Multicritical Schur measures are obtained by
tuning the parameters in such a way that the action has a critical
point of higher order.

For simplicity, we restrict to the case where
$\theta_i=\theta_i'$---ensuring that the
probability~\eqref{eq:schur_measure_def} is indeed nonnegative---and
where the set $\{i: \theta_i \neq 0\}$ is finite and of fixed cardinal
$n \geq 1$. By symmetry reasons, the edge critical point is always of
even order, and by tuning the $\theta_i$ we expect $2n$ to be the
maximal possible order. This is indeed the case.

\begin{thm}[``odd-even multicritical measure''] \label{thm:edge_oe} Let
  $\P^\oe_{n,\theta}$ denote the Schur
  measure~\eqref{eq:schur_measure_def} where we set
  $\theta_{i} = \frac{(-1)^{i+1} (n-1)! (n+1)!}{(n-i)! (n+i)!} \theta$
  for $i=1, \dots, n$, and $\theta_i=0$ for $i>n$. Then, we have
  \begin{equation}
    \label{eq:edge_oe}
      \lim_{\theta \to \infty} \P^\oe_{n,\theta} \left[ \frac{\lambda_1 - b \theta}{ (\theta d)^{\frac{1}{2n+1}} } < s \right] = F (2n+1; s), \quad  \lim_{\theta \to \infty} \P^\oe_{n,\theta} \left[ \frac{\lambda'_1 - \tilde{b} \theta}{ (\theta \tilde{d})^{\frac{1}{3}} } < s \right] = F (3; s)
  \end{equation}
  with $b = \frac{n+1}{n}$, $d = \binom{2n}{n-1}$,
  $\tilde{b} = \frac{n+1}{n} \left(\frac{(2n)!!}{(2n-1)!!} -
    1\right)$, $\tilde{d} = 2^{2n-2} n \binom{2n}{n-1}^{-1}$,
  $F(3;s)=F_{\rm TW}(s)$ the Tracy-Widom GUE distribution and
  $F(2n+1,s)$ its higher-order analogue defined in~\eqref{eq:F_def}.
\end{thm}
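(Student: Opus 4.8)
The plan is to follow Okounkov's saddle-point strategy \cite{oko03} applied to the explicit correlation kernel of the Schur measure, with the parameters of Theorem~\ref{thm:edge_oe} chosen precisely so that the action has a critical point of order $2n$ rather than the generic order $2$. First I would recall that for a Schur measure with $\theta_i=\theta_i'$ the point process $S(\lambda)$ is determinantal with kernel $K(x,y)=\frac{1}{(2\pi i)^2}\oint\oint \frac{e^{\Phi(z)-\Phi(w)}}{z-w}\,\frac{dz}{z^{x+1/2}}\,\frac{dw}{w^{-y+1/2}}$ where $\Phi(z)=\sum_{i\ge 1}\frac{\theta_i}{i}(z^i-z^{-i})$, the two contours being suitably nested circles around the origin. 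Writing $x = b\theta + (\theta/d)^{1/(2n+1)} u$ (and similarly for $y$), the kernel takes the form $\frac{1}{(2\pi i)^2}\oint\oint \frac{e^{\theta(S(z)-S(w))}}{z-w}\cdots$ with $S(z)=\sum_i\frac{\theta_i}{i\theta}(z^i-z^{-i}) - b\log z$. The content of the parameter choice is the algebraic identity that, at the point $z=1$, one has $S'(1)=S''(1)=\cdots=S^{(2n)}(1)=0$ while $S^{(2n+1)}(1)\ne 0$; this is where $b=\frac{n+1}{n}$ and $d=\binom{2n}{n-1}$ come from, and it is a finite computation with the specific rational coefficients $\theta_i/\theta = \frac{(-1)^{i+1}(n-1)!(n+1)!}{(n-i)!(n+i)!}$.

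Granting the order-$(2n)$ vanishing at $z=1$, the next step is the steepest-descent analysis proper. One deforms the $z$- and $w$-contours so that they pass through $z=1$ along the directions of steepest descent/ascent for $\mathrm{Re}\,S$; because the critical point has order $2n+1$ in the derivative of $S$ (equivalently, $S(z)-S(1)\sim c\,(z-1)^{2n+1}$), there are $2n+1$ descent rays and the local rescaling $z = 1 + (d\theta)^{-1/(2n+1)}\zeta$ turns $\theta(S(z)-S(1))$ into $\frac{\zeta^{2n+1}}{2n+1}$ plus the linear term $-u\zeta$ coming from the shift in $x$. After checking that the contributions away from $z=1$ are exponentially negligible (a global estimate on $\mathrm{Re}\,S$ along the chosen contours) and that the $\frac{1}{z-w}$ factor and the Jacobians behave as expected, the rescaled kernel converges to the higher-order Airy-type kernel built from the function $\mathrm{Ai}_{2n+1}(u)=\frac{1}{2\pi}\int \exp\!\big(i\frac{t^{2n+1}}{2n+1}+iut\big)\,dt$, whose associated Fredholm determinant is exactly $F(2n+1;s)$ of~\eqref{eq:F_def}. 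Convergence of the kernel in trace norm on $(s,\infty)$ then upgrades to convergence of the gap probability $\P[\lambda_1 - b\theta < (\theta/d)^{1/(2n+1)} s]$, since $\lambda_1-\tfrac12=\max S(\lambda)$.

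The second limit, for $\ell(\lambda)$, uses the dual description: the smallest element of the complement of $S(\lambda)$ is $-\ell(\lambda)+\tfrac12$, and under the particle--hole involution $x\mapsto -x$ the process is again determinantal with action $\tilde S(z)=S(1/z)$ up to relabeling. At the opposite edge the action has only a generic double critical point — located at some $z=z_\star\ne 1$ determined by $\tilde S'(z_\star)=\tilde S''(z_\star)=0$ — so the standard Airy analysis applies verbatim and yields $F(3;s)=F_{\rm TW}$; the constants $\tilde b$ and $\tilde d$ are read off from $\tilde S(z_\star)$ and $\tilde S'''(z_\star)$, and the stated values $\tilde b=\frac{n+1}{n}\big(\frac{(2n)!!}{(2n-1)!!}-1\big)$, $\tilde d=\frac{2^{2n-2}n}{\binom{2n}{n-1}}$ emerge from evaluating the same rational sums. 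I expect the main obstacle to be twofold: (i) verifying the algebraic miracle that the chosen $\theta_i$ kill exactly the first $2n$ derivatives of $S$ at $z=1$ and no more — this requires recognizing the coefficients as (up to sign) the expansion coefficients of a function like $(1-\sqrt{1-z})^{?}$ or a hypergeometric-type kernel and summing a terminating series; and (ii) establishing the uniform global bound on $\mathrm{Re}(S(z)-S(1))$ along admissible contours, i.e.\ showing the critical point at $z=1$ is the unique relevant one on the deformed contour and the contours can be chosen to descend monotonically, which is the technical heart of any such saddle-point argument and is where the finiteness of the support $\{i:\theta_i\ne0\}$ and positivity-type structure of the $\theta_i$ must be exploited.
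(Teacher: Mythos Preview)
Your approach is essentially that of the paper: Okounkov's determinantal kernel, saddle-point analysis of the action $S(z)=\theta^{-1}\Phi(z)-b\log z$, tuning to an order-$2n$ critical point at $z=1$, local rescaling producing the higher-order Airy kernel, and Fredholm convergence. One correction concerns the $\ell(\lambda)$ edge. The relevant second critical point is not an unspecified $z_\star\neq 1$ but exactly $z=-1$: this follows because the Laurent structure of $\Phi(z)=\sum_i\frac{\theta_i}{i}(z^i-z^{-i})$ forces any critical point of $S$ on the unit circle other than $z=1$ to sit at $z=-1$ for real $\theta_i$, and the paper simply observes that $S_0(z)+\tilde b\log z$ has a \emph{generic} (order-$2$) critical point there. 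Equivalently, conjugation $\lambda\mapsto\lambda'$ sends $\theta_i\mapsto(-1)^{i-1}\theta_i$, hence $\Phi(z)\mapsto -\Phi(-z)$ (not $\Phi(1/z)$ as you wrote), so the left edge for the original measure becomes the right edge at $z=1$ for the conjugate action, where no multicritical tuning has been imposed. The constants $\tilde b,\tilde d$ are then read off from the first and third derivatives of $S_0$ at $z=-1$; your anticipated obstacles (i) and (ii) are accurately identified and are indeed what the paper sketches rather than fully details.
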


As we see, we obtain a nongeneric exponent $1/(2n+1)$ for the
fluctuations of $\lambda_1$, but we still have the generic exponent
$1/3$ for the fluctuations of $\lambda'_1$. It is actually possible
to have a more symmetric situation if, rather than taking
$\theta_1,\ldots,\theta_n$ nonzero, we take
$\theta_1,\theta_3,\ldots,\theta_{2n-1}$ nonzero.

\begin{thm}[``odd multicritical measure''] \label{thm:edge_o}
    Let $\P^\o_{n,\theta}$ denote the Schur measure~\eqref{eq:schur_measure_def} where we set $\theta_{2i-1} = \frac{(-1)^{i+1} (n-1)! n!}{(2i-1) (n-i)! (n+i-1)!} \theta$ for $i=1, \dots, n$, and all other $\theta_i$ to zero.
    Then, $\P^\o_{n,\theta}$ is invariant under the conjugation of partitions $\lambda \mapsto \lambda'$, and we have
    \begin{equation}
        \lim_{\theta \to \infty} \P^\o_{n,\theta} \left[ \frac{\lambda_1 - b \theta}{ (\theta d)^{\frac{1}{2n+1}} } < s \right] = \lim_{\theta \to \infty} \P^\o_{n,\theta} \left[ \frac{\lambda'_1 - b \theta}{ (\theta d)^{\frac{1}{2n+1}} } < s \right] = F (2n+1; s) 
    \end{equation}
    with $b = 2^{4n-1}n^{-1} \binom{2n}{n}^{-2}$, $d = \frac{(2n-1)!!}{(2n-2)!!}$, and $F(2n+1;s)$ defined at~\eqref{eq:F_def} below.
  \end{thm}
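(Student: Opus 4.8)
The plan is to reduce both limit statements to a single saddle-point analysis of the Okounkov kernel, exploiting the conjugation symmetry to handle $\lambda_1$ and $\ell(\lambda)$ at once. First I would verify the conjugation invariance: since $\P^\o_{n,\theta}$ is a Schur measure with $\theta_i = \theta_i'$ supported on odd indices, and conjugation $\lambda \mapsto \lambda'$ acts on the Schur specialization by $p_i \mapsto (-1)^{i+1} p_i$, the odd-index parameters are fixed and the measure is invariant. Consequently $\lambda_1$ and $\ell(\lambda)$ have the same law, so it suffices to prove the limit law for $\lambda_1$. Recall from Section~\ref{sec:edge} that $\lambda_1 - \tfrac12 = \max S(\lambda)$, where $S(\lambda)$ is the determinantal point process with Okounkov's kernel; I would write this kernel as a double contour integral whose integrand is $e^{\theta(\Phi(z) - \Phi(w))}$ times a rational prefactor, with $\Phi(z) = \sum_{i} \frac{\theta_i}{i\theta}(z^i - z^{-i}) - (\text{position})\log z$, the bulk of the proof being the analysis of $\Phi$ near the expected edge location $b\theta$.

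The key computation is to show that, with the stated choice $\theta_{2i-1} = \frac{(-1)^{i+1}(n-1)!\,n!}{(2i-1)(n-i)!(n+i-1)!}\theta$, the action $\Phi$ has a critical point of order exactly $2n$ at $z = 1$ when the position parameter equals $b\theta$ with $b = \frac{2^{4n-1}}{n\binom{2n}{n}^2}$. Concretely, I would compute the generating function $\sum_{i=1}^n \frac{\theta_{2i-1}}{\theta} z^{2i-1}$ in closed form — the coefficients are designed so that this sum is (up to normalization) the truncated expansion of something like $\arcsin$ or a Gegenbauer-type polynomial — and then check that $\Phi'(z)$ vanishes to order $2n-1$ at $z=1$, i.e. $\Phi'(1) = \Phi''(1) = \cdots = \Phi^{(2n-1)}(1) = 0$ while $\Phi^{(2n)}(1) \neq 0$; identifying the latter with $d = \frac{(2n-1)!!}{(2n-2)!!}$ fixes the scaling constant. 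The analogous cancellations at $z = -1$ (forced by the $z \mapsto -z$ symmetry of the odd polynomial) explain why the fluctuations of $\ell(\lambda)$, governed by $\min$ of the complement near $-b\theta$, obey the \emph{same} exponent $1/(2n+1)$ — this is the structural reason the odd measure is more symmetric than the odd-even one.

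With the multicritical point located, I would deform the $z$- and $w$-contours through $z = 1$ along steepest-descent paths and rescale $z = 1 + \zeta(\theta/d)^{-1/(2n+1)}$, $w = 1 + \omega(\theta/d)^{-1/(2n+1)}$, and the spectral variable as $b\theta + s(\theta/d)^{1/(2n+1)}$. Under this scaling the exponent converges to $\frac{\zeta^{2n+1}}{2n+1} - s\zeta$ minus the same in $\omega$, so the kernel converges to the higher-order Airy kernel built from the generalized Airy function of \eqref{eq:F_def}; standard trace-class estimates on the tails of the contours then upgrade kernel convergence to convergence of the Fredholm determinant, giving $\P[\lambda_1 < b\theta + s(\theta/d)^{1/(2n+1)}] \to F(2n+1;s)$. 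The main obstacle is the global control of the steepest-descent contours away from $z = 1$: one must show $\mathrm{Re}\,\Phi$ has no competing maxima on the deformed contour (in particular that the companion critical point at $z=-1$ sits at strictly lower height for the relevant position range, so it does not contribute to the $\lambda_1$ statistics even though it governs $\ell(\lambda)$), and obtain uniform integrable bounds for the tail estimates — this is where the explicit closed form of the parameter generating function, and hence of $\Phi$, is essential rather than merely convenient.
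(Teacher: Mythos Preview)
Your approach coincides with the paper's: conjugation invariance via the involution $\omega$ reduces to $\lambda_1$, then a saddle-point analysis of Okounkov's double-contour kernel with the $\theta_{2i-1}$ tuned to make $z=1$ a degenerate critical point, local rescaling to the higher Airy kernel, and Fredholm convergence.

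One slip to fix: for the local expansion to yield $\zeta^{2n+1}/(2n+1)$ you need $\Phi'(1)=\cdots=\Phi^{(2n)}(1)=0$ with $\Phi^{(2n+1)}(1)\neq 0$ (equivalently, $\Phi'$ has a zero of order $2n$, not $2n-1$), and it is this $(2n+1)$-st derivative---not the $2n$-th---that determines $d$; your scaling and limiting exponent are correct, so the derivative count in your middle paragraph is an off-by-one. The paper also records a shortcut you may want to use: the antisymmetry $S_0(z)+S_0(z^{-1})=0$ forces all even $(z\partial_z)$-derivatives at $z=1$ to vanish automatically, so only the odd-order conditions (the system~\eqref{eq:thetabeta_relation}) need to be imposed, and there is an accompanying sign $(-1)^{n+1}$ in front of $\zeta^{2n+1}/(2n+1)$ matching the contour convention in~\eqref{eq:def_airy_gen}.
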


\begin{rem}
  For both measures, we have $\theta_1=\theta$ and the parameters $\theta_i$, $b$ and $d$ satisfy
    \begin{equation} \label{eq:multicrit}
        2\sum_{i} i^k \theta_i = \delta_{k, 0} \, b \theta + \delta_{k,2n} (-1)^{n+1} (2n)!  d \theta, \quad k=0, 2,\dots , 2n-2, 2n.
    \end{equation}
\end{rem}

When $n=1$, both measures reduce to the poissonized Plancherel
measure, and we recover the convergence in
distribution~\eqref{eq:bdj}. As soon as $n \geq 2$, they involve
specializations which are not Schur positive, but the measures are
nevertheless probability measures.

\begin{ex}
  For $n=2$, $\P^\oe_{n,\theta}$ has the form given in
  Example~\ref{ex:meastwo} with $\theta_1=\theta$,
  $\theta_2=-\frac{\theta}4$, while $\P^\o_{n,\theta}$ has
  $\theta_1=\theta$, $\theta_3=-\frac{\theta}9$ as nonzero parameters.
\end{ex}

The distributions $F(2n+1;s)$ appearing in Theorems~\ref{thm:edge_oe}
and \ref{thm:edge_o} have been previously encountered
in~\cite{ldms18,ccg19}, and we now give their definition in a
self-contained way.  First, we recall that, if $K$ is an integral
operator with kernel $K(x,y)$ acting on $L^2(X)$ ($X$ is an open
interval in what follows), it acts on functions $f \in L^2(X)$ via
``matrix multiplication'' $(Kf) (x) = \int_X K(x, y) f(y) \dx y$. For
such operators which are trace-class one can define the \emph{Fredholm
  determinant} of $1-K$ ($1$ the identity operator) on $L^2 (X)$ by
\begin{equation}
    \det(1-K)_{L^2(X)} = \sum_{m \geq 0} \frac{(-1)^m}{m!} \int_X \cdots \int_X \det_{1 \leq i, j \leq m} [K(x_i, x_j)] \dx x_1 \cdots \dx x_m
\end{equation}
where there are $m$ integrals in the $m$-th summand (and the term $m=0$ yields 1).

Consider the following \emph{generalized} (order $2n+1$) \emph{Airy function}:
\begin{equation}
    \Ai_{2n+1} (x) = \int_{i \R + \delta}  \exp \left( \frac{(-1)^{n-1} \zeta^{2n+1}}{2n+1} - x \zeta \right) \frac{\dx \zeta}{2 \pi i}
\end{equation}
where $\delta>0$ is small and the contour is up-oriented.\footnote{Comparing with~\cite[Eq.~(5)]{ldms18}, we chose different integration conventions for the same function. Their expression is different for $n$ even and comes from the change of variables $z = - \zeta$. Otherwise said, the contours of~\cite[Eq.~(5)]{ldms18} are such that $\Re(z^{2n+1}) < 0$ whereas ours have $\Re((-1)^{n-1} \zeta^{2n+1}) < 0$.} Notice they satisfy the generalized Airy differential equation $\left(\frac{d}{dx}\right)^{2n} A(x) = (-1)^{n-1} x A(x)$ and that $\Ai_3$ is the usual Airy \emph{Ai} function. Then $F(2n+1; s)$ is the following Fredholm determinant
\begin{equation} \label{eq:F_def}
    F(2n+1; s) = \det(1-\mcA_{2n+1})_{L^2(s, \infty)}
\end{equation}
where $\mcA_{2n+1}$ is the higher order Airy kernel given by
\begin{multline} \label{eq:def_airy_gen}
    \mcA_{2n+1} (x, y) = \int\limits_{i \R - \delta} \frac{\dx \omega}{2 \pi i} \int\limits_{i \R + \delta} \frac{\dx \zeta}{2 \pi i} \frac{\exp \left( \frac{(-1)^{n-1} \zeta^{2n+1}}{2n+1} - x \zeta \right)}{\exp \left( \frac{(-1)^{n-1} \omega^{2n+1}}{2n+1} - y \omega \right)}  \frac{1}{\zeta - \omega} \\
    = \int_0^\infty \Ai_{2n+1} (x+t) \Ai_{2n+1} (y+t) \dx t
    =  \frac{\sum_{i=0}^{2n-1} (-1)^{n-1+i} \Ai_{2n+1}^{(i)} (x) \Ai_{2n+1}^{(2m-1-i)} (y) }{x-y}
\end{multline}
(both contours above are up-oriented). Note that $\mcA_3(x, y) = \frac{\Ai_3(x) \Ai'_3(y) - \Ai'_3(x) \Ai_3(y)}{x-y}$
is the usual Airy kernel and that $F(3; s) = F_{\rm TW}(s)$ is the Tracy--Widom GUE distribution~\cite{tw94_airy}. In the $x=y$ case, the third equality should be taken in the l'H\^opital limit sense.

\section{Limit shapes} \label{sec:ls}

\begin{figure}
  \centering
  \def\svgwidth{0.5\columnwidth}
  {\scriptsize 
\begingroup%
  \makeatletter%
  \providecommand\color[2][]{%
    \errmessage{(Inkscape) Color is used for the text in Inkscape, but the package 'color.sty' is not loaded}%
    \renewcommand\color[2][]{}%
  }%
  \providecommand\transparent[1]{%
    \errmessage{(Inkscape) Transparency is used (non-zero) for the text in Inkscape, but the package 'transparent.sty' is not loaded}%
    \renewcommand\transparent[1]{}%
  }%
  \providecommand\rotatebox[2]{#2}%
  \newcommand*\fsize{\dimexpr\f@size pt\relax}%
  \newcommand*\lineheight[1]{\fontsize{\fsize}{#1\fsize}\selectfont}%
  \ifx\svgwidth\undefined%
    \setlength{\unitlength}{421.79745483bp}%
    \ifx\svgscale\undefined%
      \relax%
    \else%
      \setlength{\unitlength}{\unitlength * \real{\svgscale}}%
    \fi%
  \else%
    \setlength{\unitlength}{\svgwidth}%
  \fi%
  \global\let\svgwidth\undefined%
  \global\let\svgscale\undefined%
  \makeatother%
  \begin{picture}(1,0.62593033)%
    \lineheight{1}%
    \setlength\tabcolsep{0pt}%
    \put(0,0){\includegraphics[width=\unitlength]{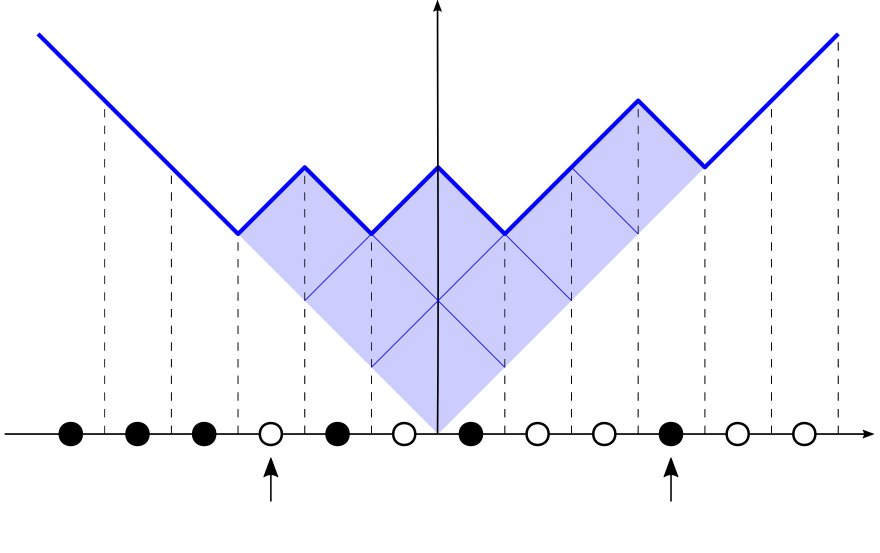}}%
    \put(0.25798842,0.03068229){\makebox(0,0)[lt]{\lineheight{1.25}\smash{\begin{tabular}[t]{l}left edge\end{tabular}}}}%
    \put(0.69776361,0.03074017){\makebox(0,0)[lt]{\lineheight{1.25}\smash{\begin{tabular}[t]{l}right edge\end{tabular}}}}%
  \end{picture}%
\endgroup%
}
  \caption{The profile (thick blue line) and the set $S(\lambda)=\{ \frac{7}{2},\frac{1}{2},-\frac{3}{2},-\frac{7}{2}, -\frac{9}{2},\ldots \} $ (black dots, corresponding to the $-1$ slopes in the profile) for the partition $\lambda=(4,2,1)$.}
  \label{fig:ydtomaya}
\end{figure}

In this section we describe the limit shapes for the multicritical $\P^\o_{n,\theta}$- and $\P^\oe_{n,\theta}$-distributed random partitions of Theorems~\ref{thm:edge_oe} and~\ref{thm:edge_o}. Proofs are omitted for brevity.

To begin, recall that the Young diagram of a partition can be
represented in ``Russian convention'' as the graph of a piecewise
linear function composed of slope $\pm 1$ segments, which we call its
\emph{profile}. See Figure~\ref{fig:ydtomaya}.

If $\lambda$ is distributed according to the measures $\P^\oe_{n,\theta}$ or
$\P^\o_{n,\theta}$ of Theorems~\ref{thm:edge_oe} and~\ref{thm:edge_o}, and if we
rescale by a factor $1/\sqrt{\theta}$ in both directions, then the
profile converges as $\theta \to \infty$ to the graph of a
deterministic $1$-Lipschitz function, denoted $\Omega = \Omega^{\o / \oe}_n$. We have
$\Omega'=1-2\rho$, where $\rho$ is the limiting density profile of the set $S(\lambda)=\{ \lambda_i - i + \frac{1}{2} | i \geq 1 \}$.

\begin{figure}
  \centering
  \def\svgwidth{0.5\columnwidth}
  {\scriptsize
  \input{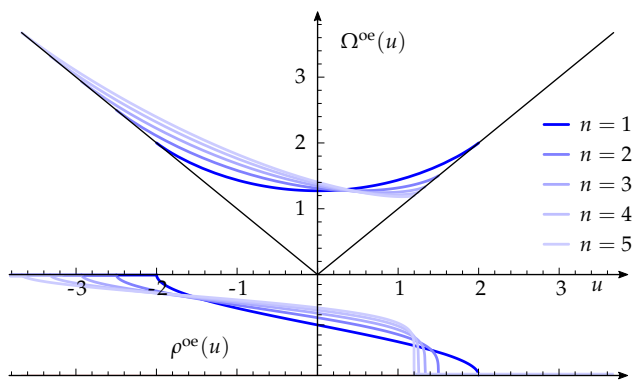}
  }
  \caption{Limit shape and density profile of $\P^\oe_{n,\theta}$-distributed
    random partitions.}
  \label{fig:asymetric}
\end{figure}

\begin{figure}
  \centering
  \def\svgwidth{0.5\columnwidth}
  {\scriptsize
  \input{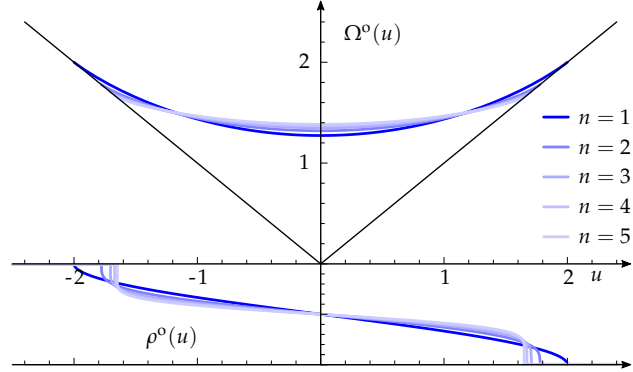}
  }
  \caption{Limit shape and density profile of $\P^\o_{n,\theta}$-distributed
    random partitions, for $n=1,\dots,5$. Notice the symmetry with respect to the vertical axis.}
  \label{fig:symetric}
\end{figure}

The limiting density profiles may be computed exactly. Let us denote them as follows: 
\begin{equation}
  \rho^{\o / \oe}_n(u) = \lim_{\theta \to \infty} \sum_{\lambda : \theta u \in S(\lambda)} \mathbb{P}^{\o / \oe}_{n,\theta} (\lambda).
\end{equation}
In the $\oe$ case we have, with $b = \frac{n+1}{n}$, $\tilde{b} = \frac{n+1}{n} \left(\frac{(2n)!!}{(2n-1)!!}-1\right)$:
\begin{equation}
  \rho^\oe_n(u) =   \tfrac{1}{\pi}\arccos \left( 1 - \tfrac{1}{2}\smallbin{2n}{n-1}^{\frac{1}{n}}( b-u \big)^{\frac{1}{n}}\right), \quad u \in \left[-\tilde{b},b\right]
\end{equation}
and $\rho^\oe_n(u) = 1$ for $u < -\tilde{b}$, $\rho^\oe_n(u) = 0$ for $u > b$. The limit profile---depicted in Figure~\ref{fig:asymetric}---is 
$\Omega^{\oe}_n(u) = \tilde{b} + \int_{-\tilde{b}}^{u} \left[ 1- 2 \rho^\oe_n(v)\right] \dx v$.  
A similar profile, for $n$=2, was recently observed in tight-binding fermions~\cite{Bocini_Stephan_2020}. 

In the $\o$ case and for $b = 2^{4n-1} n^{-1} \smallbin{2n}{n}^{-2}$ we have: 
\begin{equation}
\rho^\o_n(u) =  \frac{\chi(u)}{\pi}, \quad \int^{\chi(u)}_0 (2 \sin \phi)^{2n-1} \dx \phi = (-1)^{n+1}\smallbin{2n-1}{n} u,\quad u \in \left[-b,b\right]
\end{equation} 
continued to $\rho^\o_n(u) = 1$ for $u < -{b}$ and $\rho^\o_n(u) = 0$ for $u > b$. The limit shape, \emph{symmetric under the vertical axis} and shown in Figure~\ref{fig:symetric}, is $\Omega^{\o}_n(u) = b + \int_{-b}^{u} \left[ 1- 2 \rho^\o_n(v)\right] \dx v$.

Both $\Omega^\o_n$ and $\Omega^\oe_n$ are extensions of the Vershik--Kerov--Logan--Shepp limit curve---see e.g.~\cite{rom15}---to multicritical random partitions; indeed they become the former if $n=1$.

\section{Toeplitz determinants and unitary matrix integrals}
\label{sec:toep}

In this section we review the connection between Schur measures and
unitary matrix integrals, and we relate our multicritical measures 
to the integrals studied in~\cite{ps90}. For simplicity, we assume that the parameters
$\theta_i,\theta'_i$ of~\eqref{eq:schur_measure_def} are such that
$\theta_i=\theta_i'$ for all $i$, and $\theta_i=0$ for $i$ large
enough. We introduce the polynomials $V$ and $\tilde{V}$ defined by
\begin{equation}
  \label{eq:Vdef}
    V(z) = \sum_{i \geq 1} \theta_i \frac{z^i}i, \quad \tilde{V}(z+z^{-1}) = V(z) + V(z^{-1}).
\end{equation}
In physical parlance $\tilde{V}$, modulo a multiplicative constant, is often called the \emph{potential}.

\begin{ex}
    If $V(z) = \theta_1 z + \frac{\theta_2}{2} z^2 + \frac{\theta}{3} z^3$ we have $\tilde{V}(x) = -\theta_2 + (\theta_1-\theta_3) x + \frac{\theta_2 }{2} x^2 + \frac{\theta_3 }{3} x^3$.
\end{ex}

\begin{prop} \label{prop:unit_1}
  For $\lambda$ distributed as in~\eqref{eq:schur_measure_def} with $\theta_i = \theta'_i$ for all $i$, we have:
\begin{equation}
    e^{\sum_i \theta_i^2/i} \cdot \P [\lambda'_1 \leq \ell] = \det_{1 \leq i, j \leq \ell}  [f_{j-i}] = \E_{U \in \mathcal{U}(\ell)} \left[ \exp \tr\, \tilde{V}(U+U^*) \right]
  \end{equation}
  where the middle Toeplitz determinant has symbol
  $\sum_{k \in \Z} f_k z^k = \exp \tilde{V}(z+z^{-1})$, and $\E_{U \in \mathcal{U}(\ell)}$ is the expectation with respect to the Haar measure over the unitary group $\mathcal{U}(\ell)$.
\end{prop}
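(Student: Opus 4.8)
The plan is to prove the two equalities in turn, each being a classical identity that I would spell out for completeness. Since $\theta_i=\theta_i'$ we have $Z=\exp\sum_i\theta_i\theta_i'/i=\exp\sum_i\theta_i^2/i$, so the first equality amounts to $\sum_{\lambda:\ell(\lambda)\le\ell}s_\lambda[\theta_1,\theta_2,\dots]^2=\det_{1\le i,j\le\ell}[f_{j-i}]$. Write $h_k:=h_k[\theta_1,\theta_2,\dots]$ and $H(z):=\sum_{k\ge0}h_kz^k=\exp V(z)$. I would parametrize partitions with at most $\ell$ parts by the strictly decreasing tuples $\mu_i:=\lambda_i-i+\ell$, equivalently by the $\ell$-element subsets $S=\{\mu_1>\cdots>\mu_\ell\}\subset\Z_{\ge0}$, and invoke the Jacobi--Trudi identity in the form $s_\lambda[\theta_1,\theta_2,\dots]=\det_{1\le i,j\le\ell}(h_{\mu_i-\ell+j})$, valid for every $\ell\ge\ell(\lambda)$. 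The sum over $\lambda$ then becomes a sum of $(\det P_S)^2$ over all $\ell$-element subsets $S\subset\Z_{\ge0}$, where $P$ is the $\ell\times\infty$ matrix with entries $P_{j,m}=h_{m-\ell+j}$ ($m\ge0$) and $P_S$ is its column-$S$ submatrix (note $\det P_S=s_\lambda[\theta_1,\theta_2,\dots]$ up to transpose). By the Cauchy--Binet formula this equals $\det(PP^{\mathsf T})$, whose $(j,j')$ entry is $\sum_{m\ge0}h_{m-\ell+j}h_{m-\ell+j'}$; using $h_k=0$ for $k<0$ together with $1\le j,j'\le\ell$, the shift $m\mapsto m-\ell+j$ lets me rewrite this as $\sum_{m\ge0}h_mh_{m+j'-j}=f_{j'-j}$ with $f_k:=\sum_{a-b=k}h_ah_b$, i.e.\ the claimed Toeplitz matrix. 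Finally $\sum_{k\in\Z}f_kz^k=H(z)H(z^{-1})=\exp\bigl(V(z)+V(z^{-1})\bigr)=\exp\tilde V(z+z^{-1})$ by the definition~\eqref{eq:Vdef} of $\tilde V$ (here $V(z)+V(z^{-1})$ is a Laurent polynomial invariant under $z\mapsto z^{-1}$, hence a polynomial in $z+z^{-1}$), so $\det_{i,j}[f_{j-i}]$ is exactly the Toeplitz determinant of order $\ell$ with symbol $f(z):=\exp\tilde V(z+z^{-1})$.

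For the second equality I would use the Heine--Szeg\H{o} correspondence between Toeplitz determinants and unitary matrix integrals. Since $U+U^*=U+U^{-1}$ is Hermitian with eigenvalues $2\cos\phi_k=e^{i\phi_k}+e^{-i\phi_k}$, where $e^{i\phi_1},\dots,e^{i\phi_\ell}$ are the eigenvalues of $U$, one has $\exp\tr\tilde V(U+U^*)=\prod_{k=1}^\ell f(e^{i\phi_k})$, which is a class function of $U$. The Weyl integration formula on $\mathcal U(\ell)$ then gives
\[
\E_{U\in\mathcal U(\ell)}\!\left[\exp\tr\tilde V(U+U^*)\right]=\frac{1}{\ell!}\int_{[0,2\pi]^\ell}\ \prod_{1\le i<j\le\ell}\bigl|e^{i\phi_i}-e^{i\phi_j}\bigr|^2\ \prod_{k=1}^\ell f(e^{i\phi_k})\ \frac{\dx\phi_k}{2\pi}.
\]
Writing the Vandermonde factor as $\det(e^{i(a-1)\phi_b})_{1\le a,b\le\ell}$ and using $\overline{e^{i\phi}}=e^{-i\phi}$, the Andreief (Heine) identity turns the right-hand side into $\det_{1\le a,b\le\ell}\bigl(\int_0^{2\pi}e^{i(a-b)\phi}f(e^{i\phi})\tfrac{\dx\phi}{2\pi}\bigr)=\det_{1\le a,b\le\ell}[f_{b-a}]$, which is precisely the Toeplitz determinant identified above. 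Chaining the displays proves the proposition.

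The only genuinely delicate point is the Cauchy--Binet step: one must justify the rearrangement of infinite sums, and above all check that every inner summation range collapses to $m\ge0$ by virtue of $h_k=0$ for $k<0$ --- this is exactly what upgrades the a priori matrix $\bigl(\sum_m h_{m-\ell+j}h_{m-\ell+j'}\bigr)_{j,j'}$ to a genuine Toeplitz matrix. Absolute convergence here, and hence validity of Cauchy--Binet, is immediate because $V$ is a polynomial: $\exp V(z)$ is entire of finite order, so the coefficients $h_k$ decay super-geometrically. Everything else is routine bookkeeping with Jacobi--Trudi and the Weyl integration formula, and nothing about the particular multicritical tuning of the $\theta_i$ is used: the statement holds for any polynomial $V$.
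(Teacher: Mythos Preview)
Your proof is correct and follows the same route as the paper's: the paper simply invokes Gessel's identity~\cite{ges90} for the first equality and Heine's identity (see e.g.~\cite{dia03}) for the second, whereas you unpack both --- Jacobi--Trudi plus Cauchy--Binet for the former, Weyl integration plus Andreief for the latter. The convergence check you flag is indeed the only point needing care, and your justification via the polynomiality of $V$ is adequate.
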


\begin{proof}
  The left-hand side is equal to
  $\sum_{\lambda'_1 \leq \ell}
  (s_\lambda[\theta_1,\theta_2,\ldots])^2$ which, by Gessel's
  identity~\cite[Thm.~16]{ges90}, is equal to the middle Toeplitz
  determinant. The second equality is Heine's identity. 
\end{proof}

We also have the following similar identity regarding $\lambda_1$.

\begin{prop} \label{prop:unit_2}
  For $\lambda$ distributed as in~\eqref{eq:schur_measure_def} with $\theta_i = \theta'_i$ for all $i$, we have:
    \begin{equation}
      \label{eq:unit_2}
    e^{\sum_i \theta_i^2/i} \cdot \P [\lambda_1 \leq \ell] = \det_{1 \leq i, j \leq \ell}  [g_{j-i}] = \E_{U \in \mathcal{U}(\ell)} \left[ \exp \tr\, (-\tilde{V}(-U-U^*)) \right]
  \end{equation}
  where the middle Toeplitz determinant has symbol
  $\sum_{k \in \Z} g_k z^k = \exp (-\tilde{V}(-z-z^{-1}))$.
\end{prop}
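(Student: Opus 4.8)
The plan is to deduce this from Proposition~\ref{prop:unit_1} by conjugating partitions. Recall that the involution $\omega$ on symmetric functions satisfies $\omega(s_\lambda)=s_{\lambda'}$ and $\omega(p_i)=(-1)^{i-1}p_i$~\cite{mac}; composing with the specialization $p_i\mapsto\theta_i$ gives $s_{\lambda'}[\theta_1,\theta_2,\theta_3,\dots]=s_\lambda[\theta_1,-\theta_2,\theta_3,-\theta_4,\dots]$. Since replacing $\theta_i$ by $(-1)^{i-1}\theta_i$ leaves each $\theta_i^2$ unchanged, the constant $Z=\exp\sum_i\theta_i^2/i$ is untouched, so the image of the Schur measure with parameters $(\theta_i)$ under the conjugation $\lambda\mapsto\lambda'$ is the Schur measure with parameters $((-1)^{i-1}\theta_i)$, which still satisfies the hypotheses of Proposition~\ref{prop:unit_1}.

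First I would use $\lambda_1=\ell(\lambda')$ together with this pushforward to rewrite
\[
e^{\sum_i\theta_i^2/i}\cdot\P(\lambda_1\le\ell)=\sum_{\ell(\mu)\le\ell}\bigl(s_\mu[\theta_1,-\theta_2,\theta_3,-\theta_4,\dots]\bigr)^2 .
\]
Applying Gessel's identity~\cite{ges90} exactly as in the proof of Proposition~\ref{prop:unit_1}, but for the parameters $(-1)^{i-1}\theta_i$, identifies this with $\det_{1\le i,j\le\ell}[g_{j-i}]$ whose symbol is $\exp\tilde V^{-}(z+z^{-1})$, where $V^{-}(z)=\sum_{i\ge1}(-1)^{i-1}\theta_i z^i/i$ and $\tilde V^{-}$ is obtained from $V^{-}$ as in~\eqref{eq:Vdef}. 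Now $V^{-}(z)=-V(-z)$, and since $(-z)^{-1}=-z^{-1}$ the definition~\eqref{eq:Vdef} gives $\tilde V(-z-z^{-1})=V(-z)+V(-z^{-1})$, hence $\tilde V^{-}(z+z^{-1})=-\tilde V(-z-z^{-1})$. So the symbol equals $\exp(-\tilde V(-z-z^{-1}))$, as claimed. The last equality of~\eqref{eq:unit_2} is then Heine's identity~\cite{dia03}, as in Proposition~\ref{prop:unit_1}: for a symbol of the form $\exp h(z+z^{-1})$ the product over the eigenvalues of $U\in\mathcal U(\ell)$ equals $\exp\tr h(U+U^{*})$, here with $h(x)=-\tilde V(-x)$.

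The only point requiring care is the bookkeeping of signs: checking that $\omega$ indeed sends the $(\theta_i)$-measure to the $((-1)^{i-1}\theta_i)$-measure (in particular the invariance of $Z$), and then correctly propagating the substitution $z\mapsto-z$ through the two-step definition~\eqref{eq:Vdef} of $\tilde V$. There is no new analytic input: conjugation is a bijection from $\{\mu:\ell(\mu)\le\ell\}$ onto $\{\lambda:\lambda_1\le\ell\}$, and Gessel's and Heine's identities are invoked verbatim from the proof of Proposition~\ref{prop:unit_1}.
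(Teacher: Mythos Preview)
Your proof is correct and follows essentially the same approach as the paper: the paper states that Proposition~\ref{prop:unit_2} is a straightforward consequence of Proposition~\ref{prop:unit_1} together with the lemma that conjugation $\lambda\mapsto\lambda'$ sends the Schur measure with parameters $(\theta_i)$ to the one with parameters $((-1)^{i-1}\theta_i)$, proved via the involution $\omega$. You have simply spelled out in full the sign bookkeeping (the passage from $V^-$ to $-\tilde V(-\,\cdot\,)$) that the paper leaves implicit.
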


It is a straightforward consequence of Proposition~\ref{prop:unit_1} and the following:

\begin{lem}
  If $\lambda$ is distributed according to the Schur
  measure~\eqref{eq:schur_measure_def}, then the conjugate partition
  $\lambda'$ is distributed according to the Schur measure of
  parameters $\tilde{\theta_i}=(-1)^{i-1}\theta_i$,
  $\tilde{\theta_i}'=(-1)^{i-1}\theta_i'$.
\end{lem}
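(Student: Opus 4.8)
The plan is to derive the lemma from the behaviour of the fundamental involution $\omega$ of the ring of symmetric functions under the specializations appearing in~\eqref{eq:schur_measure_def}. Recall (see~\cite{mac,sta99}) that $\omega$ is the algebra automorphism determined by $\omega(p_i) = (-1)^{i-1} p_i$ for all $i \geq 1$, and that it satisfies $\omega(s_\mu) = s_{\mu'}$ for every partition $\mu$. Since a specialization $p_i \mapsto \theta_i$ is, by definition, an algebra homomorphism, for any symmetric function $f$ one has $f[\tilde\theta_1, \tilde\theta_2, \dots] = (\omega f)[\theta_1, \theta_2, \dots]$, where we write $\tilde\theta_i = (-1)^{i-1}\theta_i$; taking $f = s_\mu$ gives the key identity
\begin{equation} \label{eq:conj_schur}
  s_{\mu'}[\theta_1, \theta_2, \dots] = s_\mu[\tilde\theta_1, \tilde\theta_2, \dots].
\end{equation}

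Next I would observe that the normalizing constant is unchanged: $\exp \sum_{i\geq 1} \frac{\tilde\theta_i \tilde\theta_i'}{i} = \exp \sum_{i\geq 1}\frac{\theta_i\theta_i'}{i} = Z$, because $\bigl((-1)^{i-1}\bigr)^2 = 1$. Since conjugation $\mu \mapsto \mu'$ is an involution of the set of partitions, the pushforward of the Schur measure~\eqref{eq:schur_measure_def} under conjugation assigns to a partition $\mu$ the mass $\P(\mu') = Z^{-1} s_{\mu'}[\theta]\, s_{\mu'}[\theta']$, which by~\eqref{eq:conj_schur} equals $Z^{-1} s_\mu[\tilde\theta]\, s_\mu[\tilde\theta']$. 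This is precisely the Schur measure with parameters $\tilde\theta_i, \tilde\theta_i'$, proving the lemma.

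Alternatively, and without invoking $\omega(s_\mu) = s_{\mu'}$, one can argue purely at the level of the dual Jacobi--Trudi identity: from $\sum_{k\geq 0} e_k z^k = \exp \sum_{i\geq 1} (-1)^{i-1} \tfrac{p_i z^i}{i}$, compared with the generating series for the $h_k$ recalled in the introduction, one gets $e_k[\theta_1,\theta_2,\dots] = h_k[\tilde\theta_1,\tilde\theta_2,\dots]$ for all $k$, whence $s_{\mu'}[\theta] = \det_{i,j} e_{\mu_i - i + j}[\theta] = \det_{i,j} h_{\mu_i - i + j}[\tilde\theta] = s_\mu[\tilde\theta]$, recovering~\eqref{eq:conj_schur}. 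Either way, there is no genuine obstacle: the one point deserving a little care is that the specializations in~\eqref{eq:schur_measure_def} need not be evaluations at finitely many variables, so the argument must be phrased in terms of homomorphisms out of the universal ring of symmetric functions rather than by substituting numbers for the $x_i$; granting this, the lemma is immediate.
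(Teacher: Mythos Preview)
Your proof is correct and follows essentially the same route as the paper: both invoke the involution $\omega$ sending $p_i \mapsto (-1)^{i-1} p_i$ and $s_\mu \mapsto s_{\mu'}$ to obtain $s_{\mu'}[\theta] = s_\mu[\tilde\theta]$, from which the lemma is immediate. Your additional remarks on the normalizing constant and the alternative dual Jacobi--Trudi argument are sound but not needed beyond what the paper states.
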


\begin{proof}
  This follows from the relation $s_\lambda[\theta_1,\theta_2,\ldots] = s_{\lambda'}[\tilde{\theta}_1,\tilde{\theta}_2,\ldots]$
  that results from the classical involution $\omega$ on the algebra
  of symmetric functions mapping the power sum $p_i$ to
  $(-1)^{i-1} p_i$ and the Schur function $s_\lambda$ to
  $s_{\lambda'}$.
\end{proof}

Another consequence of the above lemma is the fact, mentioned in
Theorem~\ref{thm:edge_o}, that $\P^\o$ is invariant under conjugation.

When we specialize Proposition~\ref{prop:unit_2} to the multicritical
measures $\P^\oe_{n,\theta}$ of Theorem~\ref{thm:edge_oe}, then the
right-hand side of~\eqref{eq:unit_2} matches, up to a change of
variable $U \to -U$, the multicritical unitary matrix integrals of
Periwal and Shevitz~\cite{ps90}. Indeed, the derivative $V'_k(z)$
given on p.~737 of \emph{op.\ cit.}\ is proportional to $V'(z)$ for
$k=n$ in our present notations, and the proportionality constant can
be reabsorbed in $\theta$.

\section{Sketch of proof} 
\label{sec:proof}

Let us sketch the proof of Theorems~\ref{thm:edge_oe}
and~\ref{thm:edge_o}. We present the argument for the
$\P^\o_{n,\theta}$ measure as it is slightly simpler, and make
comments at the end on the difference with the $\P^\oe_{n,\theta}$
measure.

We use the fact, already mentioned at the beginning of
Section~\ref{sec:edge}, that $S(\lambda)$ is a determinantal
point process. This means that, fixing $m$ and
$k_1, \dots, k_m \in \Z+\frac{1}{2}$, we have
\begin{equation}
    \P^\o_{n,\theta}(\{ k_1, \dots, k_m \} \in S(\lambda)) = \det_{1 \leq i, j \leq m} K(k_i, k_j) 
\end{equation}
where, by~\cite{oko01}, the discrete ($\ell^2$ operator) kernel $K$
equals (for some small $\epsilon > 0$)
\begin{equation} \label{eq:K_def}
    K(k, \ell) = \frac{1}{(2 \pi i )^2}\oint_{|w| = 1 - \epsilon} \oint_{|z| = 1 + \epsilon} \frac{ e^{V(z)-V(z^{-1}) }} { e^{V(w)-V(w^{-1}) } } \frac{\dx z \dx w}{z^{k+1/2} w^{-\ell+1/2} (z-w)}
\end{equation}
with $V$ as in~\eqref{eq:Vdef}.  Combinatorially, the above integral
is just coefficient extraction: we look at the coefficient of
$z^k/w^\ell$ in a generating series (since $\epsilon>0$,
$\frac1{z-w}$ should be expanded as $\sum_{i \geq 0} \frac{w^i}{z^{i+1}}$). Moreover, inclusion-exclusion---see e.g.~\cite[Sections 3 and 5]{bg16} or~\cite[Ch.~2]{rom15}---gives that the \emph{gap} probability $\P^\o_{n,\theta}(\lambda_1 \leq l)$ is equal to the discrete Fredholm determinant $\det(1-K)_{\ell^2 \{l+1/2,l+3/2,\dots\}}$.

In the multicritical regime we look for numbers $\beta$ and $\theta_1, \theta_3, \dots, \theta_{2n-1}$ satisfying
\begin{equation} \label{eq:thetabeta_relation}
    \sum_{i=1,3,\dots,2n-1} i^{k} \theta_i = - \delta_{k,0} \frac{\beta}{2}, \quad k = 0, 2, \dots, 2n-2
\end{equation} 
and solve for each of them in terms of $\theta_1=\theta$. We find 
$\beta=b \theta$ with $b$ and
$\theta_1, \theta_3, \dots, \theta_{2n-1}$ as in
Theorem~\ref{thm:edge_o}.  The correlation kernel becomes
\begin{equation} \label{eq:S_steepest_descent}
    K(k, \ell) = \frac{1}{(2 \pi i )^2}\oint_{|w| = 1 - \epsilon} \oint_{|z| = 1 + \epsilon} \frac{e^{\theta [S_0(z)-S_0(w)] } \dx z \dx w}{z^{k+1/2} w^{-\ell+1/2}(z-w)} 
\end{equation}
with $S_0(z) = \sum\limits_{i=1}^n \frac{(-1)^{i+1} (n-1)! n!}{(2i-1) (n-i)! (n+i-1)!} \frac{(z^{2i-1} - z^{1-2i})}{2i-1}$. The equations~\eqref{eq:thetabeta_relation} ensure that
\begin{equation}
  \label{eq:dervan}
    \left. (z \partial_z)^i [S_0(z) - b \log z] \right|_{z=1} = 0, \quad 1 \leq i \leq 2n
\end{equation}
meaning $z=1$ is a critical point of order $2n$. The same is true for $z=-1$. Notice that the relation~\eqref{eq:dervan} is automatically satisfied for even $i$ by the symmetry relation $S_0(z)+S_0(z^{-1})=0$; the specific choice of coefficients ensures that it also holds for odd $i$ between $1$ and $2n-1$.

We now analyze the scaling regime
\begin{equation}
  \label{eq:scalreg}
  \theta \to \infty, \qquad k = \lfloor b \theta + x (\theta d)^\frac{1}{2n+1} \rfloor,
  \qquad  \ell = \lfloor b \theta + y (\theta d)^\frac{1}{2n+1} \rfloor
\end{equation}
with $d = \frac{(2n-1)!!}{(2n-2)!!}$. In this regime, the
integral~\eqref{eq:S_steepest_descent} will be dominated by the
vicinity of the critical point $z=w=1$ (if we considered instead the
regime $k,\ell \approx -b\theta$, then the critical point $z=w=-1$
would dominate). We perform the change of variable
\begin{equation} \label{eq:coord_change}
    z = 1 + \zeta ( d \theta^{-1})^{\frac{1}{2n+1}}, \quad w = 1 + \omega ( d \theta^{-1})^{\frac{1}{2n+1}}
\end{equation}
where $\zeta$ and $\omega$ are to be integrated over $i\R+\delta$ and
$i\R-\delta$ respectively. The quantity $\theta S_0(z)-k \ln z$
which appears exponentiated in the integral may be approximated as
\begin{equation}
  \frac{S^{(2n+1)} (1)} {(2n+1)!} \frac{\zeta^{2n+1}}{d} - x \zeta + O\left(\frac{1}{\theta^{1/(2n+1)}}\right) = (-1)^{n+1} \frac{\zeta^{2n+1}}{2n+1} - x \zeta + O\left(\frac{1}{\theta^{1/(2n+1)}}\right)
\end{equation}
and we estimate $-\theta S_0(w)+\ell \ln w$ similarly. Plugging these
estimates into~\eqref{eq:S_steepest_descent}, we recognize the double
integral representation~\eqref{eq:def_airy_gen} for
$\mcA_{2n+1}(x, y)$. By analytical arguments (dominated convergence,
tail bounds, etc.) similar to those given in
e.g.~\cite[Section~5]{bb19}, we deduce that
\begin{equation}
    (d^{-1} \theta)^{\frac{1}{2n+1}} K \left( b \theta + x (d\theta)^{\frac{1}{2n+1}},\ b \theta + y (d\theta)^{\frac{1}{2n+1}} \right) \to \mcA_{2n+1}(x, y) \quad \text{as } \theta \to \infty .
\end{equation}

To finish the proof, we show that $K(k, \ell)$ has exponential decay which then shows the discrete Fredholm determinant $\P^\o_{n,\theta}(\lambda_1 \leq l) = \det (1-K)_{\ell^2\{ l+1/2, l+3/2, \dots \}}$ converges to the continuous one $\det (1-\mcA_{2n+1})_{L^2(s, \infty)} = F(2n+1; s)$ when $l = b \theta + s (d\theta)^{\frac{1}{2n+1}}$.  

In the odd+even multicritical case, the analysis of the scaling
regime~\eqref{eq:scalreg} is the same. However, we lose symmetry under
conjugation. This means that $V(z)$ and hence the function $S_0(z)$
appearing in~\eqref{eq:def_airy_gen} are not odd functions of $z$
anymore. At the point $z=-1$, which is relevant for studying the
asymptotics of $\lambda'_1$, we find that $S_0(z)+\tilde{b}\ln z$
has a \emph{generic} double critical point which
leads to the second equality in~\eqref{eq:edge_oe}.

\section{Concluding remarks}
\label{sec:conc}

In this paper we have introduced Schur measures displaying the same
multicritical edge behavior as the fermionic models considered by Le
Doussal, Majumdar and Schehr~\cite{ldms18}. We also computed limit
shapes and explained how our measures map exactly to the
Periwal--Shevitz multicritical unitary matrix models~\cite{ps90}. This
gives a combinatorial explanation to the coincidence noted
in~\cite{ldms18}.

The approach of Periwal and Shevitz relies on the method of orthogonal
polynomials. Through this approach, one obtains a different expression
for the higher order distributions~$F(2n+1;s)$ in terms of solutions
of the Painlevé II hierarchy. It is shown in \cite{ccg19}---see also
Appendix~G of the arXiv version of \cite{ldms18}---that it is indeed
equal to the Fredholm determinant~\eqref{eq:F_def}. Multicriticality
of a similar flavor was also observed at the spectrum edge of
\emph{Hermitian} random matrix ensembles by Claeys, Its and
Krasovsky~\cite{cik10}.

For $n=1$, our measures reduce to the poissonized Plancherel measure,
while on the unitary random matrix side we obtain a model first
studied by Gross and Witten, see e.g.~\cite{joh98} and references
therein. Our work shows that the connection observed by Johansson
in~\cite{joh98} extends to higher orders $n \geq 2$ of
multicriticality, even though the formulation in terms of longest
increasing subsequences seems more elusive.

It is interesting to consider multicritical analogues of other
random matrix limiting phenomena, e.g.~the Pearcey process. We plan 
to address this in future work, and expect that it involves coalescence
of an \emph{odd} number of critical points: 3 (for Pearcey), 5, 7, etc.

\paragraph{Main result of this note (odd case).} Let us summarize, in one place, the results of Theorem~\ref{thm:edge_o} on one hand and of Propositions~\ref{prop:unit_1} and~\ref{prop:unit_2} on the other.\footnote{An analogous result could be stated for Theorem~\ref{thm:edge_oe}; we omit it for brevity.} 

Fix $n \geq 1$ and let $\lambda$ be $\P^\o_{n,\theta}$-distributed~\eqref{eq:schur_measure_def} with $\theta_{2i-1} = \frac{(-1)^{i+1} (n-1)! n!}{(2i-1) (n-i)! (n+i-1)!} \theta$ for $i=1,\dots, n$ and $\theta > 0$. Let $b = 2^{4n-1}n^{-1} \binom{2n}{n}^{-2}$; $d = \frac{(2n-1)!!}{(2n-2)!!}$; $V(z) = \sum_{i=1}^n \frac{\theta_{2i-1} z^{2i-1}}{2i-1}$; $\tilde{V}(z+z^{-1}) = V(z)+V(z^{-1})$; and $\sum_{k \in \Z} f_k z^k = \exp [V(z) + V(z^{-1})]$. Then the quantities
    \begin{equation}
        \P^\o_{n,\theta} (\lambda_1 \leq \ell), \quad \P^\o_{n,\theta} (\lambda'_1 \leq \ell), \quad \frac{\det_{1 \leq i, j \leq \ell}  [f_{j-i}]} {e^{\sum_{i=1}^n \theta_{2i-1}^2 / (2i-1)} }, \quad \frac{\E_{U \in \mathcal{U}(\ell)} \left[ \exp \tr\, \tilde{V}(U+U^*) \right]} {e^{\sum_{i=1}^n \theta_{2i-1}^2 / (2i-1)} } 
    \end{equation}
are all equal, and equal to the Fredholm determinant $\det(1-K)$ ($K$ as in~\eqref{eq:K_def}) on $\{\ell+1/2, \ell+3/2, \dots \}$. Asymptotically, they tend to the distribution $F(2n+1; s)$ in~\eqref{eq:F_def} when $\ell = b \theta + s (\theta d)^\frac{1}{2n+1}$ and $\theta \to \infty$.

\acknowledgements{We thank S. Bocini, G. Chapuy, T. Claeys, A. Kuijlaars,
  P. Le Doussal, A. Occelli, G. Schehr and J.-M. Stéphan for support
  and conversations regarding this project.}

\printbibliography

\end{document}